\newtheorem{lema}{Lemma}[section]
\newtheorem{prop}{Proposition}[section]
\newtheorem{rem}{Remark}[section]
\newtheorem{conj}{Conjecture}[section]
\newcommand{\Q}{\mathbb Q}
\newcommand{\Qbar}{{\overline{\mathbb Q}}}
\newcommand{\GalQ}{\Gal(\Qbar/\Q)}
\newcommand{\Z}{\mathbb Z}
\newcommand{\F}{\mathbb F}
\newcommand{\C}{\mathbb C}
\newcommand{\GL}{\mathrm{GL}}
\newcommand{\End}{\operatorname{End}}
\newcommand{\Frob}{\operatorname{Frob}}
\newcommand{\Jac}{\operatorname{Jac}}
\newcommand{\Gal}{\operatorname{Gal}}
\newcommand{\p}{\mathfrak{p}}
\newcommand{\cO}{\mathcal {O}}
\newfont{\gotip}{eufm10 at 12pt}
\newcommand{\gm}{\mbox{\gotip m}}
\newcommand{\gp}{\p}
\newcommand{\on}[1]{\operatorname{#1}}
\newcommand{\nn}{\color{black}}
\newcommand{\Tr}{\operatorname{Tr}}
\begin{document}

\title{The Sato-Tate conjecture for a Picard curve\\
with Complex Multiplication}
\author{Joan-C. Lario and Anna Somoza \\ (with an appendix by Francesc Fité)}
\date{\today}
\maketitle

\abstract{Let $C/\Q$ be the genus $3$ Picard 
curve given by the affine model $y^3=x^4-x$. In this paper we compute its Sato-Tate group, show the
generalized Sato-Tate conjecture for $C$, and compute the statistical moments for the limiting distribution
of the normalized local factors of $C$.}

\section{Introduction}
Serre \cite{Ser12} provides a vast generalization of the Sato-Tate conjecture, which is known to be true for varieties with complex multiplication \cite{Joh13}. As a down-to-earth example, in this paper
we consider the Picard curve defined over $\Q$ 
given by the affine model
$$
C \colon y^3 = x^4 -x \,.
$$ 
One easily checks that $[0:1:0]$ is the unique point of 
$C$ at infinite, and that
$C$ has good reduction at all primes different from 
$3$. The Jacobian variety of $C$ is absolutely simple and it 
has complex multiplication by the cyclotomic field $K=\Q(\zeta)$
where $\zeta$ is a primitive $9$th root of unity.

With the help of Sage, we compile information 
on the number of points that the
reduction of $C$ has 
over finite fields of small characteristic:

\begin{table}[h]
$$
\begin{array}{ccccc}
\hline
\;\;p\;\; & |C(\F_p)| & |C(\F_{p^2})| & |C(\F_{p^3})| \\
\hline
2 & 3 & 5 & 9  \\
5 & 6 & 26 & 126  \\
7 & 8 & 50 & 365  \\
11 & 12 & 122 & 1332   \\
13 & 14 & 170 & 2003  \\
17 & 18 & 392 & 4914  \\
19 & 14 & 302 & 6935  \\
\hline
\end{array}
$$
\caption{Number of points $|C(\F_{p^i})|$.}
\end{table}

For every primer $p$ of good reduction, we consider the local zeta function
$$\zeta (C/\F_p ; s) = \exp\left(\sum_{k\geq 1} |C(\F_{p^k})| \, \frac{p^{-ks}}{k}\right)\, .$$

It follows from Weil's conjectures \cite{We49} that the zeta function is a rational function of $T = p^{-s}$. That is,
\begin{equation*}
\zeta (C/\F_p ; T) = \exp\left(\sum_{k\geq 1} |C(\F_{p^k})| 	\,
\frac{T^k}{k}\right) = \frac{L_p(C,T)}{(1-T)(1-pT)}
\end{equation*}
where the so-called \emph{local factor} of $C$ at $p$ 
$$
L_p(C,T) = \sum_{i=0}^6 b_i T^i = \prod_{i=1}^6 (1-\alpha_i T)
$$
is a polynomial of degree $6$ with integral coefficients and the complex numbers $\alpha_i$ satisfy $\vert\alpha_i\vert = \sqrt{p}$. In particular, it is determined by the three numbers $|C(\F_p)|$, $|C(\F_{p^2})|$, $|C(\F_{p^3})|$ according to:

$$
\begin{array}{c@{\,=\,}l}
b_0 & 1 \\[1pt]
b_1 & |C(\F_p)| - (p+1) \\[1pt]
b_2 & (|C(\F_{p^2})| - (p^2+1)+b_1^2)/2\\[1pt]
b_3 & (|C(\F_{p^3})| - (p^3+1)-b_1^3+3b_2b_1)/3 \\[1pt]
b_4 & p b_2 \\[1pt]
b_5 & p^2 b_1\\[1pt]
b_6 & p^3\,.
\end{array}
$$
For all $m\geq 1$ it holds
$$
| C(\F_{p^m}) | = 1+p^m  - \sum_{i=1}^6 \alpha_i^m \,.
$$
The local factors for small good primes are:
\begin{table}[h]
$$
\begin{array}{cl}
\hline
\;\;p\;\;  & L_p(C,T) \\
\hline
 2 & (1+2T^2)(1-2T^2+4T^4) \\
 5 &  (1+5T^2)(1-5T^2+25T^4) \\
 7 &  1+7T^3+343T^6 \\
 11 &  (1+11T^2)(1-11T^2+121T^4) \\
 13 &  1-65T^3+2197T^6 \\
 17 &  (1+17T^2)^3 \\
 19 & 1-6T-12T^2+169T^3-228T^4-2166T^5+6859T^6 \\
\hline
\end{array}
$$
\caption{Local factors $L_p(C,T)$.}
\end{table}

\vskip 0.2truecm 

Even if for every such prime $p$ all terms of the sequence 
$$|C(\F_p)|, |C(\F_{p^2})|, |C(\F_{p^3})|, 
\dots,  |C(\F_{p^m})|, \, \dots 
\quad (m\geq 1)$$
are determined by the first three, the obtention of these
first three can be a hard computational task as soon as the prime $p$ gets large. However, the presence of complex multiplication
enables the fast computation of the local factors
$L_p(C,T)$ (see Section \ref{numeric}).

For future use, we introduce some
notation. The ring of integers of $K$ will be denoted by ${\mathcal O}=\Z[\zeta]$, and
the unit group ${\mathcal O}^* \simeq \Z/18\Z \times \Z \times \Z$ has generators $\epsilon_0=-\zeta^2$, 
$\epsilon_1=  \zeta^4 - \zeta^3 + \zeta$, 
$\epsilon_2=  \zeta^5 + \zeta^2 - \zeta$. 
Let $\sigma_i$ denote the automorphism of 
$\Gal(K/\Q)$ determined by $\sigma_i(\zeta)= \zeta^i$; one has that $\sigma_2$ generates the Galois group $\Gal(K/\Q)\simeq (\Z/9\Z)^*$.  The unique ramified prime in $K/\Q$ is\break$3\cO = (1+\zeta+\zeta^4)^6$.

Since the Jacobian variety $\operatorname{Jac}(C)$ has complex multiplication, the work of Shimura and Taniyama \cite{ST} ensures the existence of an ideal $\gm$ of the ring of integers $\cO$
and a Grössencharakter 
$\psi \colon I_K(\gm) \to \C^\ast$, where $I_K(\gm)$ stands for the group of fractional ideals coprime with $\gm$, 
$$
\psi(\alpha \cO ) = \prod_{\sigma \in \Phi^*} {\,}^\sigma \alpha
\quad \text{if $\alpha \equiv 1 \,(\!\bmod^{\ast} \,\gm)$,
} 
$$
such that
$L(\psi,s) = L(C,s) $. 
The infinite type $\Phi^*$ 
is the reflex of the CM-type $\Phi$ of 
$\operatorname{Jac}(C)$.
Up to a finite number of Euler factors, one has
$$
 L(\psi,s) = 
 \prod_{\gp} \left( 1 - 
 \psi(\gp)
 \operatorname{N}(\gp)^{-s} \right)^{-1}
\text{ and \ \  }
 L(C,s) = 
 \prod_{p}   L_p(C,p^{-s})^{-1} \,.
$$
Hence, the local factor $L_p(C,T)$
can be obtained from the (monic) irreducible polynomial of 
$\psi(\gp)$ over $\Q$ according to
$$
L_p(C,T)=T^6 \operatorname{Irr}(\psi(\gp), 1/T^f;\Q)^{6/(fd)}\,,
$$
where $f$ is the residual class degree of $p$ in $K$, and 
$d=[\Q(\psi(\gp))\colon \Q]$.

\begin{lema}  
There exists a Gr\"ossencharakter $\psi\colon I_K(\gm)\to \C^*$ of conductor
$\gm = (1+\zeta+\zeta^4)^4$ and infinite type 
$\Phi^*= 
\{ \sigma_1,\sigma_5,\sigma_7 \}= 
\{ \sigma_2^0,\sigma_2^4,\sigma_2^5 \}$.
\end{lema}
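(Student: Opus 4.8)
The plan is to prove the two assertions — the infinity type $\Phi^{*}$ and the conductor $\gm=\p^{4}$, where $\p=(1+\zeta+\zeta^{4})$ is the unique prime of $\cO$ above $3$ — separately.

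\emph{Infinity type.} The order-$9$ automorphism $\iota\colon(x,y)\mapsto(\zeta^{3}x,\zeta y)$ of $C$ is well defined, since $(\zeta^{3}x)^{4}-\zeta^{3}x=\zeta^{3}(x^{4}-x)=(\zeta y)^{3}$, and it realises the embedding $\Z[\zeta]\hookrightarrow\End(\Jac(C))$, $\zeta\mapsto\iota$. Taking the basis $\{\,dx/y^{2},\ x\,dx/y^{2},\ dx/y\,\}$ of $H^{0}(C,\Omega^{1}_{C})=H^{0}(\Jac(C),\Omega^{1})$, one computes $\iota^{*}(dx/y^{2})=\zeta\,dx/y^{2}$, $\iota^{*}(x\,dx/y^{2})=\zeta^{4}\,x\,dx/y^{2}$ and $\iota^{*}(dx/y)=\zeta^{2}\,dx/y$, so the CM-type of $\Jac(C)$ — the set of embeddings $\sigma_{i}\colon\zeta\mapsto\zeta^{i}$ through which $K$ acts on the holomorphic differentials — is $\Phi=\{\sigma_{1},\sigma_{2},\sigma_{4}\}=\{\sigma_{2}^{0},\sigma_{2}^{1},\sigma_{2}^{2}\}$. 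As $K/\Q$ is abelian, $K$ is its own reflex field, and since no nontrivial element of $\Gal(K/\Q)$ stabilises $\Phi$, the reflex CM-type is $\Phi^{*}=\Phi^{-1}=\{\sigma_{1},\sigma_{5},\sigma_{7}\}=\{\sigma_{2}^{0},\sigma_{2}^{4},\sigma_{2}^{5}\}$, using that $\sigma_{2}$ has order $6$ in $\Gal(K/\Q)\simeq(\Z/9\Z)^{*}$. (Replacing $\iota$ by $\iota^{-1}$, i.e. $\psi$ by $\overline{\psi}$, swaps $\Phi^{*}$ for its complex conjugate $\{\sigma_{2},\sigma_{4},\sigma_{8}\}$ and leaves $L(\psi,s)=L(C,s)$ unchanged because the local factors have rational coefficients; we fix the representative $\{\sigma_{1},\sigma_{5},\sigma_{7}\}$.)

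\emph{Conductor.} By the main theorem of complex multiplication, $\psi$ is unramified at every prime of $K$ over a prime of good reduction of $\Jac(C)$; since $C$, hence $\Jac(C)$, has good reduction away from $3$, and $3\cO=\p^{6}$ with $\p$ of residue degree $1$, we get $\gm=\p^{n}$ for some $n\ge 1$ and $\Norm_{K/\Q}(\gm)=3^{n}$. To pin down $n=4$, I would argue that $\Jac(C)$ is $\Q$-simple with $\End^{0}_{\Q}(\Jac(C))=\Q$ (Galois acts on $\End_{\Qbar}(\Jac(C))=\cO$ through the whole group $\Gal(K/\Q)$), whence $V_{\ell}(\Jac(C))\cong\operatorname{Ind}_{G_{K}}^{G_{\Q}}\chi_{\psi}$ for the Galois character $\chi_{\psi}$ attached to $\psi$; the conductor--discriminant formula then gives $N(\Jac(C)/\Q)=\abs{\disc(K/\Q)}\cdot\Norm_{K/\Q}(\gm)=3^{9}\cdot 3^{n}$, so that computing $N(\Jac(C)/\Q)=3^{13}$ (from a regular model of $C$ over $\Z_{3}$, or with a computer algebra system) forces $n=4$. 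A more explicit alternative uses that $K$ has class number $1$: picking generators $\gamma$ of several primes $\ell\nmid 3$ whose $\psi$-values are read off from the local factors $L_{\ell}(C,T)$ (for $\ell\equiv 1\pmod 9$ the six $\psi(\p_{i})$ are precisely the reciprocal roots of $L_{\ell}$), one checks that the ``defect'' $\psi(\p_{i})\big/\prod_{\sigma\in\Phi^{*}}{}^{\sigma}\gamma\in\mu_{K}$ is the value of a character of $(\cO/\gm)^{*}$ of conductor exactly $\p^{4}$; the well-definedness of $\psi$ on ideals follows from the identity $\prod_{\sigma\in\Phi^{*}}{}^{\sigma}u=1$ for every unit $u\in\cO^{*}=\langle\epsilon_{0},\epsilon_{1},\epsilon_{2}\rangle$ with $u\equiv 1\pmod{\p^{4}}$.

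The main obstacle is this last point: that the conductor is \emph{exactly} $\p^{4}$. The upper bound $n\le 4$ is the delicate part — it is the wild ramification computation at $3$ hidden in the CM package; by the Serre--Tate criterion $\Jac(C)$ has potential good reduction at $3$, so its conductor exponent there is $2g$ plus a Swan conductor, and the claim is exactly that this total equals $13$. The lower bound $n\ge 4$ requires exhibiting an obstruction at level $\p^{3}$: either a unit $u\equiv 1\pmod{\p^{3}}$ with $\prod_{\sigma\in\Phi^{*}}{}^{\sigma}u\ne 1$ (so that no Hecke character of conductor dividing $\p^{3}$ has infinity type $\Phi^{*}$), or one of the local factors $L_{\ell}(C,T)$ incompatible with every candidate character of conductor $\p^{3}$. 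Everything else — the differential computation, the reflex, and the finite unit-group and ray-class computations — is routine.
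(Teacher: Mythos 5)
Your computation of the infinity type is correct and consistent with the paper (which records the same CM-type/reflex data via the action on regular differentials in Section 2), and framing the existence via Shimura--Taniyama is a legitimate alternative starting point. The genuine gap is the conductor, and you have correctly diagnosed where it lies but not closed it. The paper's proof is a direct finite computation that you only gesture at in your ``explicit alternative'': using that $K$ has class number one and that $\cO^*=\langle\epsilon_0,\epsilon_1,\epsilon_2\rangle$, one lists exactly which exponent triples give $\epsilon_0^a\epsilon_1^b\epsilon_2^c\equiv 1\pmod{\gm}$ (nine classes mod $(18,9,3)$), verifies case by case that every such unit satisfies $\prod_{\sigma\in\Phi^*}{}^\sigma(\epsilon_0^a\epsilon_1^b\epsilon_2^c)=1$ --- this is precisely the well-definedness of $\psi(\gp):=\prod_{\sigma\in\Phi^*}{}^\sigma(u\alpha)$ for a normalized generator $u\alpha\equiv 1\pmod{\gm}$ of $\gp$, hence the existence of $\psi$ with modulus $\p^4$ --- and then checks that the same identity fails for the modulus $(1+\zeta+\zeta^4)^i$ with $i<4$, which gives exactness of the conductor. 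You state this unit identity but do not verify it; it is the entire content of the existence claim, since without it nothing guarantees that any character with infinity type $\Phi^*$ and modulus $\p^4$ exists.

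Your first route to the conductor is also problematic beyond being unexecuted: the conductor--discriminant argument requires knowing $V_\ell(\Jac(C))\cong\operatorname{Ind}\chi_\psi$, i.e.\ the identification $L(C,s)=L(\psi,s)$, which in the paper is Proposition 1.1 and is proved \emph{using} this lemma (via Hasse's characterization of the Jacobi sum, whose property (ii) is exactly the congruence $J(\gp)\equiv 1\pmod{\gm}$); so as organized here it risks circularity unless you first redo Proposition 1.1 independently. Even granting that, the needed input $N(\Jac(C)/\Q)=3^{13}$ is a nontrivial wild-ramification computation at $3$ for a genus-$3$ curve, which you defer to a regular model or a computer algebra system; it is not easier than the unit computation it would replace. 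In short: right target and correctly identified obstacles, but both the existence of $\psi$ and the exactness of the conductor remain unproved in your write-up.
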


\begin{proof}
The following holds
$$\epsilon_0^{18} \equiv 1 \pmod \gm \,,\qquad\epsilon_1^9 \equiv 1 \pmod \gm \,,\qquad \epsilon_2^3 
\equiv 1 \pmod \gm \,.
$$
Moreover, one readily checks that
$
\epsilon_0^a \epsilon_1^b \epsilon_2^c \equiv 1 \,(\!\bmod \gm)$
if and only if
$$
\begin{array}{l@{\,\equiv\,}l}
(a,b,c) \quad \, & \quad (0,0,0), \quad (2,1,2), \quad (4,2,1),  \\[3pt]
        & \quad (6,3,0), \quad (8,4,2), \quad (10,5,1), \\[3pt]
        & \quad (12,6,0), \quad (14,7,2), \quad (16,8,1), 
\end{array}
$$
mod $(18,9,3)$, respectively.
Now an easy computation case-by-case shows that if 
$\epsilon_0^a \epsilon_1^b \epsilon_2^c \equiv 1 \pmod \gm$, then 
$$
\prod_{\sigma \in \Phi ^*} {\,}^\sigma (\epsilon_0^a \epsilon_1^b \epsilon_2^c) = 1
\,.
$$

By using that $K$ has class number one, we define $\psi(\gp)$ over prime ideals $\gp$ of 
$\cO$ coprime with $\gm$ as follows. First we find a generator of 
$\gp=(\alpha)$, and then search for
$$
\epsilon_0^a \epsilon_1^b \epsilon_2^c \alpha \equiv 1 \pmod \gm
$$
with $0\leq a < 18$, $0\leq b < 9$, and $0\leq c < 3$.
The existence of such triple $(a,b,c)$
 is 
guaranteed by the fact that $(\alpha,\gm)=1$ and 
the classes of the 486 possible products
$\epsilon_0^a \epsilon_1^b \epsilon_2^c$ exhaust
the all the elements in $(\cO/\gm)^*$. It follows that
$$
\psi(\gp)=  \prod_{\sigma \in \Phi^*} 
{\,}^\sigma (\epsilon_0^a \epsilon_1^b \epsilon_2^c\alpha)  
$$
is well-defined. Finally, one extends $\psi$ over all ideals prime to $\gm$ multiplicatively. An argument along the same lines shows the non existence of a 
Gr\"ossen\-charakter of $K$ of modulus
$(1+\zeta+\zeta^4)^i$ for $i<4$. Thus, $\psi$ has conductor $\gm$. 
\end{proof}

\begin{table}[h]
\centering
\begin{adjustbox}{max width = .9\textwidth}
$
\begin{array}{rrr}
\hline
p& \psi(\gp) 
& L_p(C,T)=T^6\operatorname{Irr}(\psi(\gp), 1/T^f;\Q)^{6/(fd)} \\
\hline
5 & -5 &  (1+5T^2)(1-5T^2+25T^4)  \\
7  & -3 \zeta^{3} - 2 &   1+7T^3+343T^6  \\
11  & -1331 & (1+11T^2)(1-11T^2+121T^4)   \\
13   & 3 \zeta^{3} + 4 &  1-65T^3+2197T^6  \\
17  & 2 \zeta^{5} + \zeta^{4} + \zeta^{2} - \zeta + 1 & 
{\left(1 +17 T^{2} \right)}^{3}\\
19 & -\zeta^{4} - 2 \zeta^{3} - 2 \zeta  &  1-6T-12T^2+169T^3-228T^4-2166T^5+6859T^6  \\
23 &  -23   &  {\left(1+23 T^{2} \right)} 
{\left(1 - 23 \, T^{2} + 529T^{4} \right)} \\
29  & -29 &  {\left(1+29T^{2}\right)}    
{\left( 1 - 29 \, T^{2} + 841 T^{4}\right)} \\
31  &  6 \zeta^{3} + 1 &  1  + 124 \, T^{3} + 29791  T^{6} \\
37  & \zeta^{5} - \zeta^{3} + 2 \zeta^{2} - 1 &     
1 - 6 \, T + 42 \, T^{2}
- 47 \, T^{3} + 1554 \, T^{4} - 8214 \, T^{5} + 50653 T^{6} \\
\hline
\end{array}
$
\end{adjustbox}
\medskip
\caption{Values of the Gr\"ossencharakter $\psi$.}
\end{table}

\begin{prop}\label{prop1.1} 
Let $\psi$  be the above Gr\"ossencharakter. Then, one has $L(C,s)=L(\psi,s)$.
\end{prop}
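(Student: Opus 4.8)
The strategy is to identify the Gr\"ossencharakter $\psi$ of the Lemma with the one that Shimura--Taniyama theory attaches to $\Jac(C)$. Write $\psi_0$ for the latter: by the discussion preceding the Lemma, $\psi_0$ is characterised by $L(\psi_0,s)=L(C,s)$, its infinite type is the reflex $\Phi^*$ of the CM-type $\Phi$ of $\Jac(C)$, and — since $C$ has good reduction at every prime $\ne 3$ — its conductor is a power of the unique prime $\GP=(1+\zeta+\zeta^4)$ above $3$. It therefore suffices to prove that $\psi$ and $\psi_0$ have the same infinite type and that they do not differ by a nontrivial twist.

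For the infinite type I would exhibit the CM action directly. The map $\rho\colon(x,y)\mapsto(\zeta^3 x,\,\zeta y)$ is an automorphism of $C_{\Qbar}$ of order $9$: one has $(\zeta y)^3=\zeta^3(x^4-x)=(\zeta^3 x)^4-(\zeta^3 x)$, and $\rho^k=\mathrm{id}$ iff $9\mid k$. It acts on the basis $\{dx/y^2,\,dx/y,\,x\,dx/y^2\}$ of $H^0(C,\Omega^1)$ with eigenvalues $\zeta,\zeta^2,\zeta^4$; since $\zeta$ is among these and $\rho$ has order $9$, the induced endomorphism of $\Jac(C)$ satisfies the ninth cyclotomic polynomial, so $\zeta\mapsto\rho$ gives $\cO=\Z[\zeta]\hookrightarrow\End(\Jac(C))$ and the associated CM-type is $\Phi=\{\sigma_1,\sigma_2,\sigma_4\}$. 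As $K/\Q$ is abelian, $\Phi^*=\{\sigma_1^{-1},\sigma_2^{-1},\sigma_4^{-1}\}=\{\sigma_1,\sigma_5,\sigma_7\}$, exactly the infinite type in the Lemma. Hence $\chi:=\psi_0\psi^{-1}$ is a finite-order Hecke character of $K$ unramified outside $\GP$, i.e.\ a ray-class character of some modulus $\GP^e$.

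It then remains to show $\chi=1$. I would bound $e$ from above by computing the conductor exponent of $\Jac(C)$ at $3$ (from a regular $\Z_3$-model, or from the $3$-adic Tate module, as can be done with Sage) and using the conductor relation $N(\Jac(C))=|d_{K/\Q}|\cdot N_{K/\Q}(\mathfrak f(\psi_0))$; combined with the lower bound $e\ge 4$ already established in the proof of the Lemma, this pins $e$ down and makes $\operatorname{Cl}_{\GP^e}(K)$ an explicit finite group (finite because $h_K=1$). That group is generated by the classes of finitely many primes, which by Chebotarev may be taken among those of Tables 1--3. For each tabulated prime $p$ one has $L_p(C,T)=L_p(\psi,T)$ and also $L_p(\psi_0,T)=L_p(C,T)$, so comparing these degree-$6$ Euler polynomials forces $\chi$ to be trivial on the primes of $K$ over $p$; once those exhaust a generating set of $\operatorname{Cl}_{\GP^e}(K)$ we conclude $\chi\equiv 1$, hence $\psi=\psi_0$ and $L(C,s)=L(\psi,s)$.

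The CM-type computation and the class-field-theoretic bookkeeping are routine; the real obstacle is the last step. One must (i) control the conductor exponent $e$ well enough that $\operatorname{Cl}_{\GP^e}(K)$ becomes a concrete finite group, and (ii) verify that the primes occurring in Table 3 (up to $p=37$) generate it — so that agreement of the few local polynomials listed there, rather than of a density-zero set of Euler factors, truly forces $\chi=1$. The subtlety in (ii) is that for a split prime $p$ the identity $L_p(\psi_0,T)=L_p(\psi,T)$ only yields equality of multisets $\{\psi_0(\gp)\}_{\gp\mid p}=\{\psi(\gp)\}_{\gp\mid p}$, so one needs enough ``rigid'' primes (inert ones, or split ones with distinct $\psi$-values) in the table. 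Both (i) and (ii) are finite checks, and the table has clearly been assembled so that they go through.
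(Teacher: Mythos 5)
Your route is genuinely different from the paper's, and as written it has two real gaps that are the substance of the proposition rather than bookkeeping. The paper never compares $\psi$ with the abstract Shimura--Taniyama character $\psi_0$ at all: it invokes Hasse's characterization of the Jacobi sum $\on{J}(\gp)=-\sum_{x}\chi_\gp^3(x)\chi_\gp(1-x)$ by three properties (absolute value $\sqrt{\on{N}(\gp)}$, the congruence $\equiv 1\pmod{\gm}$, and the ideal factorization $\gp\cdot\gp^{\sigma_2^4}\cdot\gp^{\sigma_2^5}$), checks that $\psi(\gp)$ satisfies all three, hence $\psi(\gp)=\on{J}(\gp)$, and then quotes the Holzapfel--Nicolae point count $|C(\F_\gp)|=\on{N}(\gp)+1-\Tr_{K/\Q}(\on{J}(\gp))$ (reproved in the appendix). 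That argument works prime by prime and sidesteps exactly the two steps on which your plan hinges.

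The first gap is the conductor bound. You need an upper bound on the exponent $e$ with $\operatorname{cond}(\psi_0)\mid\GP^{e}$, and you propose to extract it from the conductor of $\Jac(C)$ at $3$ ``with Sage''. This is not routine: $3\le 2g+1$, the reduction at $3$ is wildly ramified, and there is no off-the-shelf algorithm returning the conductor exponent of a genus-$3$ Picard curve at $3$. Until $e$ is pinned down, $\operatorname{Cl}_{\GP^{e}}(K)$ is not a concrete group and your ``finite check'' cannot even be formulated. (Note that if you could prove $e\le 4$, the computation already in the Lemma shows the units of $\cO$ surject onto $(\cO/\GP^4)^*$, so the ray class group mod $\GP^4$ is trivial and no table of primes would be needed at all --- the entire weight of the argument rests on the unproved conductor bound.) The second gap is the multiset issue you flag but do not resolve: for $p\equiv 1\pmod 9$ the equality of Euler factors only identifies $\{\psi_0(\gp)\}_{\gp\mid p}$ and $\{\psi(\gp)\}_{\gp\mid p}$ as Galois-stable multisets, so it determines $\chi(\gp)$ only up to a ratio of the form ${}^{\tau}\psi(\gp)/\psi(\gp)$; and the split primes are precisely the ones that carry information (for $f=6$ every normalized factor is $T^6+1$). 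Resolving the ambiguity requires normalizing $\psi_0(\gp)$ by its congruence class mod $\gm$ and its ideal factorization --- which is exactly Hasse's criterion, i.e., the paper's proof. A smaller caveat: your identification of the CM-type via $\rho^*$ on $\Omega^1(C)$ is sensitive to conventions (differentials versus tangent space, and the choice of embedding $\zeta\mapsto\rho$ versus $\zeta\mapsto\rho^2$), so the claimed match of infinite types should be cross-checked against the data in Table~3 before being relied upon.
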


\begin{proof}
For every prime 
$\gp$ in $I_K(\gm)$, let $\F_{\gp} = \cO/\gp$
be the residue field of $\gp$ and consider the
character $\chi_{\gp} \colon \F_{\gp}^* \to K^*$ such that
$$\chi_{\gp}(x) \equiv x^{(\on{N}(\gp)-1)/9} 
\,(\!\bmod \, \gp)\,,$$
that we extend by $\chi_{\gp}(0)=0$. By Hasse \cite{Has}, the Jacobi sum
$$
\on{J}(\gp) := 
 - \sum_{x\in \F_{\gp}} \chi_{\gp}^3(x) \chi_{\gp}(1-x)  $$
is uniquely determined by the three properties:
\begin{itemize}
\item[(i)] $|J(\gp)| = \sqrt{\on{N}(\gp)}\,$;
\item[(ii)] $J(\gp) \equiv 1 \,(\!\bmod \,\gm)$\,;
\item[(iii)] $J(\gp) \,\cO = (\gp \cdot \gp^{\sigma_2^4} \cdot \gp^{\sigma_2^5})$\,.
\end{itemize}
One the one hand, it is easy to check that $\psi(\gp)$ satisfies (i), (ii), and (iii). On the other hand, 
Holzapfel and Nicolae \cite{Hol-Nic03} show that for a primer power $q$ such that $q\not\equiv 1 \,(\!\bmod \,9)$ one has $|C(\F_q)| = q +1$, while for $q\equiv 1 \,(\!\bmod \, 9)$ it follows
$$
|C(\F_{\gp})| = \on{N}(\gp) +1 - \on{Tr}_{K/\Q}(J(\gp)) \,,
$$ 
where $\gp$ is any prime ideal of the factorization of $q\mathcal{O}$.
The claim follows.
\end{proof}

\begin{rem}
The proof of the last equalities takes 4 pages in the referenced article \cite{Hol-Nic03}. We are grateful to Francesc Fité for 
a more concise proof included in the appendix of the present paper.
\end{rem}

The Gr\"ossencharakter $\psi$ satisfies ${}^\sigma \psi(\gp) = \psi({}^\sigma \gp ) $ for every prime ideal $\gp$ and $\sigma \in \Gal(K/\Q)$. The $L$-function of the curve $C$ over $K$ satisfies 
$$L(C_K,s) = \prod_{\sigma\in \Gal(K/\Q)} L({\,}^\sigma\psi, s) = L(C,s)^6\,.$$ 
The CM-type of $\on{Jac}(C)$ is 
$\Phi=\{ \sigma_2, \sigma_4, \sigma_8 \}$, i.e.
the reflex of $\Phi^*$.

\section{The Sato-Tate group $\operatorname{ST}(C)$}

For every prime $p\neq 3$, let us normalize
the polynomials
$$
L_p^{\operatorname{ST}}(C,T) = L_p\left(C,\frac{T}{\sqrt{p}}\right)
$$
and call them \emph{normalized local factors}
of $C$. Since they are monic, palindromic
with real coefficients, roots lying in the unit circle and Galois stable, one can think of them as the characteristic polynomials of (conjugacy classes of) matrices in the unitary symplectic group
$$\operatorname{USp}(6,\C)
=\{ M \in \GL(6,\C)\colon M^{-1}= J^{-1} M^{t} J = M^*\}\,,
$$
where $M^*$ denotes the complex conjugate transpose of $M$, 
and $J$ denotes the skew-symmetric matrix
$$
J = \begin{pmatrix}
     0 & 1 & 0 & 0 & 0 & 0 \\
    -1 & 0 & 0 & 0 & 0 & 0 \\
     0 & 0 & 0 & 1 & 0 & 0 \\
     0 & 0 & -1 & 0 & 0 & 0 \\
     0 & 0 & 0 & 0 & 0 & 1 \\
     0 & 0 & 0 & 0 & -1 & 0 \\
 \end{pmatrix} \,.
$$ 
Roughly, the Sato-Tate group attached to $C$ is defined  to be a compact subgroup 
$\operatorname{ST}(C) \subseteq \operatorname{USp}(6,\C)$ such that the characteristic polynomials of the matrices in
$\operatorname{ST}(C)$ {\it fit well} with
the normalized local factors
$L_p^{\operatorname{ST}}(C,T)$,  in the sense that  the normalized local factors $L_p^{\operatorname{ST}}(C,T)$, as $p$ varies, are equidistributed with respect to the Haar measure 
 of
$\operatorname{ST}(C)$ projected on the set of its conjugacy classes. 

In analogy with Galois theory, the presence of some extra structure on $C$ gives rise to proper subgroups of the symplectic group; moreover, the distribution of $L_p^{\operatorname{ST}}(C,T)$ can be viewed as a generalization of the classical Chebotarev distribution. 
Serre \cite{Ser12} proposes a vast generalization of the Sato-Tate conjecture (born for elliptic curves) giving a precise recipe for 
$\operatorname{ST}(C)$. 
In this section, we calculate the Sato-Tate group
$\operatorname{ST}(C)$ for our 
Picard curve $C$.

\begin{prop} \label{STgroup} Up to conjugation in 
$\operatorname{USp}(6,\C)$, the Sato-Tate group of $C$ is 
$$
\operatorname{ST}(C) = \left\langle \begin{pmatrix}
u_1&&&&&\\
&\bar{u}_1&&&&\\
&&u_2&&&\\
&&&\bar{u}_2&&\\
&&&&u_3&\\
&&&&&\bar{u}_3\\
\end{pmatrix}, 
\begin{pmatrix}
0 & 0 & 1 & 0 & 0 & 0 \\
0 & 0 & 0 & 1 & 0 & 0 \\
0 & 0 & 0 & 0 & 1 & 0 \\
0 & 0 & 0 & 0 & 0 & 1 \\
0 & -1 & 0 & 0 & 0 & 0 \\
1 & 0 & 0 & 0 & 0 & 0
\end{pmatrix}, |u_i|=1 \right\rangle
$$
In particular, there is an isomorphism $\operatorname{ST}(C) \simeq {U}(1)^3 \rtimes
(\Z/9\Z)^*$.
\end{prop}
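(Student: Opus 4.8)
The plan is to extract $\operatorname{ST}(C)$ from the complex multiplication data of $\Jac(C)$ following Serre's recipe \cite{Ser12} (whose prediction for $C$ is known to be correct by \cite{Joh13}): one computes separately the identity component $\operatorname{ST}(C)^0$, which will be a compact torus arising from the Mumford--Tate group, and the component group $\operatorname{ST}(C)/\operatorname{ST}(C)^0$ together with its action on $\operatorname{ST}(C)^0$.

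\emph{Identity component.} Base change leaves the identity component unchanged, and $\operatorname{ST}(C_F)$ is connected precisely when $F$ contains the field of definition of $\End(\Jac(C)_{\Qbar}) = \cO = \Z[\zeta]$. Since the natural action of $\Gal(K/\Q)$ on $\Z[\zeta]$ is nontrivial on each of the proper subextensions $\Q(\zeta^3)$ and $\Q(\zeta)^+$ of $K/\Q$, that field is $K$ itself, so $\operatorname{ST}(C)^0 = \operatorname{ST}(C_K)$. Over $K$ one has $L(C_K,s) = \prod_{\sigma\in\Gal(K/\Q)} L({}^\sigma\psi,s)$, so $\operatorname{ST}(C_K)$ is the compact connected group attached to the six Hecke characters ${}^\sigma\psi$; and since $\Phi^*\sqcup\overline{\Phi^*}$ is the full set of embeddings, ${}^\sigma\psi\cdot{}^{\sigma\sigma_8}\psi = \Norm_{K/\Q}$, so the six unitarized characters fall into three complex-conjugate pairs. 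Choosing a symplectic basis in which $J$ pairs each $u_j$ with $\bar u_j$, this exhibits $\operatorname{ST}(C_K)$ inside $U(1)^3$ as the diagonal matrices $\operatorname{diag}(u_1,\bar u_1,u_2,\bar u_2,u_3,\bar u_3)$. To see it is \emph{all} of $U(1)^3$ one must rule out further multiplicative relations among the ${}^\sigma\psi$; equivalently, one checks that the Mumford--Tate torus of $\Jac(C)$, a subtorus of $\Res_{K/\Q}\GG_m$, has full rank $\dim\Jac(C)+1 = 4$. This is a finite computation with the $\Gal(K/\Q)$-orbit of the cocharacter $\mu_\Phi = [\sigma_2]+[\sigma_4]+[\sigma_8]$ in $\Z[\operatorname{Hom}(K,\Qbar)]$, and it holds because the CM-type $\Phi = \{\sigma_2,\sigma_4,\sigma_8\}$ is primitive (equivalently $\Jac(C)$ is absolutely simple, which is given). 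Combined with Hecke's equidistribution theorem for the infinite-order ${}^\sigma\psi$, this yields $\operatorname{ST}(C)^0 \cong U(1)^3$.

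\emph{Component group, action, and conclusion.} By Serre's recipe and the fact just used that $\End(\Jac(C)_{\Qbar})$ is defined over $K$ but over no smaller field, $\operatorname{ST}(C)/\operatorname{ST}(C)^0 \cong \Gal(K/\Q) \cong (\Z/9\Z)^*$, cyclic of order $6$ generated by the class of $\sigma_2$. This generator acts on $\operatorname{ST}(C)^0$ through the action of $\sigma_2$ on $\Phi^* = \{\sigma_1,\sigma_5,\sigma_7\}$ by $\sigma_i\mapsto\sigma_{2i}$; reading this off on the three conjugate pairs $\{\sigma_1,\sigma_8\},\{\sigma_2,\sigma_7\},\{\sigma_4,\sigma_5\}$ it is a cyclic shift of the three coordinates that complex-conjugates one of them each step, i.e. $(u_1,u_2,u_3)\mapsto(u_2,u_3,\bar u_1)$. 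It remains to exhibit a lift of this generator to $\operatorname{USp}(6,\C)$: one takes the permutation-type matrix $P$ displayed in the statement and checks that $P\in\operatorname{USp}(6,\C)$, that conjugating $\operatorname{diag}(u_1,\bar u_1,\dots,\bar u_3)$ by $P$ induces exactly $(u_1,u_2,u_3)\mapsto(u_2,u_3,\bar u_1)$, and that $P$ maps onto a generator of the component group. Then the diagonal torus together with $P$ generates $\operatorname{ST}(C)$, realizing the extension of $(\Z/9\Z)^*$ by $U(1)^3$ with the action above.

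\emph{Expected main obstacle.} The delicate steps are the two \emph{fullness} claims. First, that $\operatorname{ST}(C)^0$ is the entire torus $U(1)^3$ and not a proper subtorus: this is exactly the rank-$4$ Mumford--Tate / primitivity computation, and it is where a hidden relation among the conjugates of $\psi$ would lurk. Second, that the component group is all of $\Gal(K/\Q)$ --- equivalently, that Frobenius classes hit every coset and that $\End(\Jac(C))$ descends to no proper subfield of $K$. Once these are established, identifying the concrete generators and the group structure is routine bookkeeping.
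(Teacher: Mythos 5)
Your proposal is correct and follows essentially the same route as the paper: Serre's recipe, identity component $U(1)^3$ obtained from non-degeneracy of the CM-type via the rank-$4$ computation on $\Z[\Gal(K/\Q)]$, component group $\Gal(K/\Q)\simeq(\Z/9\Z)^*$, and the explicit symplectic lift $\gamma$ of $\sigma_2$ acting by $(u_1,u_2,u_3)\mapsto(u_2,u_3,\bar u_1)$. The differences are presentational: the paper packages the identity component through the twisted Lefschetz group $\operatorname{TL}(C)$ of \cite{FGL14} combined with \cite{FKRS12} rather than through the conjugates of $\psi$, and it certifies that $\gamma$ lies in $\operatorname{ST}(C)$ by an explicit computation with the curve automorphism $\alpha(x,y)=(\zeta^6x,\zeta^2y)$ acting on the differentials $\omega_1,\omega_2,\omega_3$ with eigenvalues $\zeta^2,\zeta^4,\zeta^8$, checking $\gamma\alpha\gamma^{-1}={}^{\sigma_2}\alpha$. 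One small caution: primitivity of a CM-type is equivalent to non-degeneracy only for $\dim\le 3$ (which is exactly why the paper invokes both simplicity of $\Phi^*$ \emph{and} $\dim\Jac(C)=3$), so your parenthetical ``equivalently $\Jac(C)$ is absolutely simple'' should not be read as a general principle; the finite rank computation you also mention is the justification that actually carries the weight.
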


\begin{proof} The recipe of Serre in \cite{Ser12} is as follows. Fix an auxiliary prime $\ell$ of good reduction (say $\ell>3$), and fix an embedding
$\iota \colon \Q_\ell \hookrightarrow \C$. Let
$$
\rho_\ell \colon \GalQ \to \GL( V_\ell(\Jac(C)))\simeq \GL(6,\Q_\ell) 
$$
be the $\ell$-adic Galois representation attached to the $\ell$-adic Tate module of the Jacobian variety of $C$. Denote by $G$ the Zariski closure of the image $\rho_\ell(\GalQ)$, and let
$G_1$ be the Zariski closure of $G\cap \operatorname{Sp}_6(\Q_\ell)$,
where $\operatorname{Sp}_6$ denotes the symplectic group.
By definition, the Sato-Tate group $\operatorname{ST}(C)$ is a maximal compact subgroup of 
$G_1\otimes_\iota \C$. In general, one hopes that this construction does not depend on $\ell$ and $\iota$, and this is the case for our Picard curve $C$. Indeed, since the CM-type of $\Jac(C)$ is 
non-degenerate then the twisted Lefschetz group $\operatorname{TL}(C)$ 
satisfies $G_1 = \operatorname{TL}(C)\otimes \Q_\ell$ for all primes $\ell$ (see \cite[Lemma 3.5]{FGL14}). Recall that the twisted Lefschetz 
group is defined as
$$
\operatorname{TL}(C) = \bigcup_{\tau\in \GalQ} \operatorname{L}(C)(\tau)\,,
$$
where $\operatorname{L}(C)(\tau) = \{ \gamma \in \operatorname{Sp}_6(\Q) \colon 
\gamma \alpha \gamma^{-1} = \tau(\alpha) \text{ for all } \alpha \in \End(\Jac(C)_{\Qbar})\otimes \Q\}$, where $\Jac(C)_\Qbar$ denotes the base change to $\Qbar$. Here, $\alpha$ is seen as an endomorphism of $H_1(\Jac(C)_{\C} ,\Q) $. 
The reason why the CM-type of $\Jac(C)$ is non-degenerate is due
to the fact that $\Phi^*$ is simple and $\dim \Jac(C)=3$ (see \cite{K,R});
alternatively, one checks that the $\Z$-linear map:
$$
\Z[\Gal(K/\Q)] \to \Z[\Gal(K/\Q) ], \qquad \sigma_a
\mapsto  \sum_{\sigma_b\in \Phi} \sigma_{b}^{-1}\sigma_a
$$
has maximal rank $1+\dim(\Jac(C))=4$. Then, by combining \cite{BGK03} and \cite[Thm.2.16(a)]{FKRS12}, it follows that the connected component
of the identity $\operatorname{TL}(C)^0$ satisfies
$$G_1^0 = \operatorname{TL}(C)^0 \otimes \Q_\ell 
= \left\{ \operatorname{diag}(x_1,y_1,x_2,y_2,x_3,y_3) \mid x_i,y_i\in \Q_\ell^*\,,
x_i y_i = 1 \right\}\,.
$$
Thus, the connected component
of the Sato-Tate group for $C$ is equal to
$$\operatorname{ST}(C)^0 = 
\left\{ 
\operatorname{diag}(u_1,\overline{u}_1,u_2,
\overline{u}_2,u_3,\overline{u}_3) 
\colon u_i \in U(1)
\right\} \simeq U(1)^3\,.
$$
According to \cite[Prop. 2.17]{FKRS12},
it also follows that the group
of components of $\operatorname{ST}(C)$
is isomorphic to $\Gal(K/\Q)$. We claim that
$
\operatorname{ST}(C) = 
\operatorname{ST}(C)^0 \rtimes \langle \gamma \rangle 
$,
where 
$$\gamma =
\begin{pmatrix}
0 & 0 & 1 & 0 & 0 & 0 \\
0 & 0 & 0 & 1 & 0 & 0 \\
0 & 0 & 0 & 0 & 1 & 0 \\
0 & 0 & 0 & 0 & 0 & 1 \\
0 & -1 & 0 & 0 & 0 & 0 \\
1 & 0 & 0 & 0 & 0 & 0
\end{pmatrix}
 \,.
$$
To this end, we consider the automorphism of the Picard curve $C$ determined by
$\alpha(x,y)=(\zeta^6x,\zeta^2\,y)$. We still denote by $\alpha$ the induced endomorphism of
$\Jac(C)$. Under the basis of regular differentials of $\Omega^1(C)$:

$$
\omega_1 = \frac{dx}{y^2} \,,\quad
\omega_2 = \frac{dx}{y}   \,,\quad
\omega_3 = \frac{xdx}{y^2} \,, 
$$
the action induced is given by $\alpha^*(\omega_1)= \zeta^2 \omega_1$, 
$\alpha^*(\omega_2)= \zeta^4 \omega_2$, 
$\alpha^*(\omega_3)= \zeta^8 \omega_3$.
By taking the symplectic basis of 
$H_1(\Jac(C)_\C,\C)$ corresponding to the
above basis (with respect to the skew-symmetric matrix $J$), we get the matrix 
$$\alpha =
\begin{pmatrix}
     \zeta^2 & 0 & 0 & 0 & 0 & 0 \\
     0 & \overline{\zeta}^2 & 0 & 0 & 0 & 0 \\
     0 & 0 & \zeta^4 & 0 & 0 & 0 \\
     0 & 0 & 0 & \overline{\zeta}^4 & 0 & 0 \\
     0 & 0 & 0 & 0 & \zeta^8 & 0 \\
     0 & 0 & 0 & 0 & 0 & \overline{\zeta}^8 \\
 \end{pmatrix} \,.
$$
One checks that the matrix $\gamma$
satisfies 
$$
\gamma \alpha \gamma^{-1} = {\,}^{\sigma_2} \alpha \,,
$$
which implies that $\gamma\in \operatorname{TL}(\sigma_2)$. Hence, $\gamma$ belongs to $\operatorname{ST}(C)$; finally, a short computations shows that
$\gamma^6=-\operatorname{Id} \in \operatorname{ST}(C)^0$, but
$\gamma^i$ is not in $\operatorname{ST}(C)^0$
for $1\leq i < 6$.

\end{proof}

\vskip 0.3truecm

\begin{rem}\label{Poly-shape}
For future use, we compute the shape of the 
characteristic polynomials in each component of the Sato-Tate group.
To this end, we take a random matrix $\operatorname{diag}(u_1,\overline{u}_1,u_2,\overline{u}_2,u_3,\overline{u}_3)$ in the connected component 
$\operatorname{ST}^0(C)$, and we get:

$$
\begin{array}{l@{\, :\quad} l}
\operatorname{ST}^0(C)\cdot \operatorname{Id} & \prod_{i=1}^3 (T-u_i)(T-\overline{u}_i) \\[3pt]
\on{ST}^0(C)\cdot \gamma & T^6+1 \\[3pt]
\on{ST}^0(C)\cdot \gamma^2 &  T^{6} + \left(u_{1} \overline{u}_2 u_{3} + \overline{u}_1 u_{2}
\overline{u}_3 \right) T^{3} + 1 \\[3pt]
\on{ST}^0(C)\cdot \gamma^3 &  {\left(T^{2} + 1\right)}^{3} \\[3pt]
\on{ST}^0(C)\cdot \gamma^4 & T^{6} - \left(u_{1} \overline{u}_2 u_{3} + \overline{u}_1 u_{2}
\overline{u}_3 \right) T^{3} + 1 \\[3pt]
\on{ST}^0(C)\cdot \gamma^5 &  T^6+1 \,.\\[3pt]
\end{array}
$$
\end{rem}

\begin{rem}\label{Remark}
As a consequence of \cite[Prop. 2.17]{FKRS12}, we also obtain that, 
for every subextension
$K/K'/\Q$, one has $\on{ST}(C_{K'}) = \on{ST}(C)^0 \rtimes \langle \gamma^{[K':\Q]} \rangle$, where $C_{K'}$ denotes the base change
$C\times_\Q K'$.
\end{rem}

\section{Sato-Tate distribution}

A general strategy to prove the expected distribution is due to Serre \cite{Ser68}. For every non-trivial irreducible representation 
$\phi\colon \operatorname{ST}(C) \to \GL_m(\C)$, one needs to consider the $L$-function
$$
L(\phi, s) = \prod_{p\neq 3} {\det(1-\phi(x_p) p^{-s} ) }^{-1}\,,
$$
where $x_p = \frac{1}{\sqrt{p}} 
\rho_{\ell}(\Frob_p) \in 
\operatorname{ST}(C)$, and then show that $L(\phi, s)$ is invertible,
in the sense that  it has meromorphic continuation to $\operatorname{Re}(s)\geq 1$ and it holds
$$
L(\phi, 1) \neq 0\,.
$$

\begin{prop} \label{prop3.1} The Picard curve 
$C\colon y^3 = x^4 -x$
satisfies the generalized Sato-Tate conjecture.
More explicitly, the sequence
$$
\left\{
\left( \frac{{\,}^{\sigma_2}\psi(\gp)}{\sqrt{N(\gp)} } 
, \frac{{\,}^{\sigma_4}\psi(\gp)}{\sqrt{N(\gp)} } , 
\frac{{\,}^{\sigma_8}\psi(\gp)}{\sqrt{N(\gp)} },\, p
\right) \right\}_{p\neq 3} \subseteq 
U(1)^3 \rtimes (\Z/9\Z)^* \simeq \on{ST}(C)\,,
$$
where $\gp$ is any prime ideal of the factorization of $p\mathcal{O}$, is equidistributed over $U(1)^3 \rtimes (\Z/9\Z)^*$
with respect to the Haar measure.
\end{prop}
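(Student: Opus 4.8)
The plan is to verify Serre's criterion \cite{Ser68}: it suffices to show that for every non-trivial irreducible representation $\phi\colon\operatorname{ST}(C)\to\GL_m(\C)$ the Artin-type $L$-function $L(\phi,s)=\prod_{p\neq 3}\det(1-\phi(x_p)p^{-s})^{-1}$ is invertible, i.e. extends meromorphically to $\operatorname{Re}(s)\ge 1$ and satisfies $L(\phi,1)\neq 0$; the usual Tauberian argument then gives equidistribution of the classes $x_p=\tfrac{1}{\sqrt p}\rho_\ell(\Frob_p)$ in $\operatorname{ST}(C)$ against the Haar measure. By the description of $L_p(C,T)$ in terms of $\psi(\gp)$ and Proposition \ref{prop1.1}, the connected-component coordinates of $x_p$ are $\bigl({}^{\sigma_{2^i}}\psi(\gp)/\sqrt{N(\gp)}\bigr)_{i=1,2,3}$ and its component is $\Frob_p\in\Gal(K/\Q)$, which identifies $x_p$ with the tuple in the statement.

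First I would classify the irreducible representations of $\operatorname{ST}(C)=U(1)^3\rtimes(\Z/9\Z)^*$ via Mackey's machine for a semidirect product with abelian normal subgroup. The characters of $U(1)^3$ are $\chi_{\mathbf a}$, $\mathbf a=(a_1,a_2,a_3)\in\Z^3$, and a direct computation with the matrix $\gamma$ of Proposition \ref{STgroup} shows that $\sigma_2$ acts on them by $\chi_{(a_1,a_2,a_3)}\mapsto\chi_{(a_2,a_3,-a_1)}$, an action of order $6$. Thus $\mathbf a=0$ is the unique fixed point (its extensions being the Dirichlet characters of $(\Z/9\Z)^*\simeq\Gal(K/\Q)$), the vectors $\mathbf a=(t,-t,t)$ with $t\neq 0$ have stabilizer the order-$3$ subgroup $\langle\gamma^2\rangle$ — for which $U(1)^3\rtimes\langle\gamma^2\rangle=\operatorname{ST}(C_{K'})$ with $K'=\Q(\sqrt{-3})$ the quadratic subfield of $K$, by Remark \ref{Remark} — and every other $\mathbf a$ has trivial stabilizer. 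Hence every non-trivial irreducible $\phi$ is either (i) a non-trivial Dirichlet character of $(\Z/9\Z)^*$, or (ii) $\operatorname{Ind}_{\operatorname{ST}(C_{K'})}^{\operatorname{ST}(C)}(\widetilde\chi_{\mathbf a}\otimes\xi)$ with $\mathbf a=(t,-t,t)$, $t\neq 0$, and $\xi$ a character of $\Z/3\Z$, or (iii) $\operatorname{Ind}_{\operatorname{ST}(C_K)}^{\operatorname{ST}(C)}\chi_{\mathbf a}$ with trivial-stabilizer $\mathbf a$ (note $\operatorname{ST}(C_K)=U(1)^3$ since $\gamma^6=-\operatorname{Id}$).

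Next I would identify $L(\phi,s)$ with a genuine Hecke $L$-function. Since $\psi$ and all its conjugates are unramified outside $3$, there are no extraneous Euler factors, and $x_p$ lies in the connected component exactly when $p\equiv 1\pmod 9$. In case (i), $L(\phi,s)$ is a non-trivial Dirichlet $L$-function of modulus $9$. In case (iii), inductivity of $L$-functions gives $L(\phi,s)=L(\psi_{\mathbf a},s)$ over $K$, where $\psi_{\mathbf a}:=\prod_i({}^{\sigma_{2^i}}\psi)^{a_i}\cdot\operatorname{N}^{-(a_1+a_2+a_3)/2}$ is a unitary Hecke character of $K$. In case (ii), the $\langle\gamma^2\rangle$-invariance of $\chi_{\mathbf a}$ means $\psi_{\mathbf a}$ is $\Gal(K/K')$-invariant, hence factors through $\operatorname{N}_{K/K'}$ (as $H^1(\Gal(K/K'),C_K)=0$) and descends to a Hecke character $\psi'_{\mathbf a}$ of $K'$; inductivity then gives $L(\phi,s)=L(\psi'_{\mathbf a}\otimes\xi',s)$ over $K'$ for a suitable finite-order $\xi'$.

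Finally I would prove non-triviality, so that Hecke's theorem (a non-trivial Hecke $L$-function is entire and non-zero on $\operatorname{Re}(s)\ge 1$) applies. Case (i) is clear. For (ii) and (iii) it suffices to show $\psi_{\mathbf a}$ is non-trivial — indeed of infinite order — when $\mathbf a\neq 0$; then $\psi'_{\mathbf a}$ also has infinite order, so $\psi'_{\mathbf a}\otimes\xi'$ is non-trivial. Computing infinity types in $\Z[\Gal(K/\Q)]$ with $\Phi^*=\{\sigma_1,\sigma_5,\sigma_7\}$, the infinity type of $\psi_{\mathbf a}$ is $\sum_i a_i\,\sigma_{2^i}\Phi^*-\tfrac12(a_1+a_2+a_3)\sum_\sigma\sigma$, and since $\sigma_2\Phi^*=\{\sigma_1,\sigma_2,\sigma_5\}$, $\sigma_4\Phi^*=\{\sigma_1,\sigma_2,\sigma_4\}$, $\sigma_8\Phi^*=\{\sigma_2,\sigma_4,\sigma_8\}$, the $\sigma_7$-coefficient of $\sum_i a_i\,\sigma_{2^i}\Phi^*$ vanishes while its $\sigma_5$- and $\sigma_8$-coefficients are $a_1$ and $a_3$; hence it is a multiple of $\sum_\sigma\sigma$ only for $\mathbf a=0$. (This is precisely the non-degeneracy of $\Phi$ recorded in the proof of Proposition \ref{STgroup}.) Therefore all $L(\phi,s)$ are invertible and the proposition follows. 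The main obstacle is organizational rather than conceptual — running Mackey's classification correctly, pinning down the descent to $K'$ and the shapes of the resulting Hecke characters, and keeping all normalizations consistent — while the essential analytic input (non-vanishing of Hecke $L$-functions at $s=1$) is classical and the non-degeneracy is already available.
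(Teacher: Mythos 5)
Your proposal is correct and follows the same overall route as the paper: Serre's invertibility criterion, the Mackey/little-group classification of the irreducibles of $U(1)^3\rtimes(\Z/9\Z)^*$ (the paper cites \cite[Prop.~25]{Ser77} and computes the same stabilizers $H_{\underline b}$, namely the full group only for $\underline b=0$, the order-$3$ subgroup $\langle 2^2\rangle$ for $\underline b=(t,-t,t)$, and trivial otherwise), identification of the resulting $L$-functions with Hecke $L$-functions of unitarized Gr\"ossencharaktere, and Hecke's non-vanishing theorem. Two sub-steps differ. First, to show the character $\Psi=\prod_i({}^{\sigma_{2^i}}\psi)^{b_i}\operatorname{N}^{-(b_1+b_2+b_3)/2}$ is non-trivial for $\underline b\neq 0$, the paper argues via the prime-ideal factorization $\psi(\gp)\cO=\gp\cdot\gp^{\sigma_2^4}\cdot\gp^{\sigma_2^5}$ (property (iii) in the proof of Proposition \ref{prop1.1}), whereas you compute the infinity type and invoke non-degeneracy of the CM-type; these are two faces of the same computation, but yours has the advantage of directly yielding that $\Psi$ has infinite order, which is what you need to conclude that the twists $\psi'_{\mathbf a}\otimes\xi'$ in your case (ii) remain non-trivial. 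Second, for the order-$3$ stabilizer the paper simply writes $L(\Psi,s)=L(\theta,s)^3$ and extracts invertibility of the cube root, while you descend $\Psi$ through the norm to a Hecke character of the quadratic subfield $K'=\Q(\sqrt{-3})$ and use inductivity; your version is more explicit about why $\xi'$ cannot cancel the non-triviality, at the cost of invoking $H^1(\Gal(K/K'),C_K)=0$. You also treat explicitly the case $\underline b=0$ with non-trivial $\chi$ (ordinary Dirichlet $L$-functions mod $9$), which the paper leaves implicit; including it is a small but genuine improvement in completeness.
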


\nn
\begin{proof}
The irreducible representations of $\operatorname{ST}(C)\simeq U(1)^3 \rtimes (\Z/9\Z)^*$
can be described as follows (see \cite[\S 8.2]{Ser77}). For every triple $\underline{b}=(b_1,b_2,b_3)$ in $\Z^3$, we consider the
irreducible character of $U(1)^3$ given by
$$
\phi_{\underline{b}} \colon U(1)^3 \to \C^* \,,\quad
\phi_{\underline{b}}(u_1,u_2,u_3) = \prod_{i=1}^3 u_i^{b_i}\,, 
$$
and let 
$$
H_{\underline{b}}  =
\{ h \in (\Z/9\Z)^* \colon
\phi_{\underline{b}}(u_1,u_2,u_3)  = 
\phi_{\underline{b}}({\,}^h (u_1,u_2,u_3)) \}\,. 
$$
The action of $(\Z/9\Z)^*$ on $U(1)^3$ is given by  conjugation through powers of the matrix $\gamma$; more precisely, for the generator $g=2$ of $(\Z/9\Z)^*$ we have ${\,}^g (u_1,u_2,u_3) = (u_2,u_3,\overline{u}_1)$ since
$$
\gamma 
\begin{pmatrix}
u_1&&&&&\\
&\bar{u}_1&&&&\\
&&u_2&&&\\
&&&\bar{u}_2&&\\
&&&&u_3&\\
&&&&&\bar{u}_3\\
\end{pmatrix} \gamma^{-1}
=
\begin{pmatrix}
u_2&&&&&\\
&\bar{u}_2&&&&\\
&&u_3&&&\\
&&&\bar{u}_3&&\\
&&&& \bar{u}_1 &\\
&&&&& u_1\\
\end{pmatrix}
\,.
$$
An easy computation shows that $H_{\underline{b}}=
\langle 2 \rangle$ or $\langle 2^3 \rangle$
if and only if $\underline{b}=(0,0,0)$, while 
$H_{\underline{b}} = \langle 2^2 \rangle$ for 
$\underline{b}=(b_1,-b_1,b_1)$ with $b_1\neq 0$, and $H_{\underline{b}}$ is trivial otherwise.
Then, one has that 
$$
\phi_{\underline{b}}(u_1,u_2,u_3,h) = \prod_{i=1}^3 u_i^{b_i}
$$
is a character of 
$H:=U(1)^3 \rtimes H_{\underline{b}}$.
By \cite[Prop. 25]{Ser77} every irreducible representation of $G:=U(1)^3 \rtimes (\Z/9\Z)^*$
is of the form 
$\theta := \on{Ind}_H^G ( \phi_{\underline{b}}\otimes \chi )$, where $\chi$ is a character of $H_{\underline{b}}$ that may be viewed as a character of $H$ by composing with the projection $H \to  H_{\underline{b}}$.

Let $\theta=\on{Ind}_H^G ( \phi_{\underline{b}}\otimes \chi )$ be an irreducible representation of 
$U(1)^3 \rtimes (\Z/9\Z)^*$ as above. If we denote
the sequence by
$$
x_{p} = 
\left( \frac{{\,}^{\sigma_2}\psi(\gp)}{\sqrt{N(\gp)} } 
, \frac{{\,}^{\sigma_4}\psi(\gp)}{\sqrt{N(\gp)} } , 
\frac{{\,}^{\sigma_8}\psi(\gp)}{\sqrt{N(\gp)} },\, p
\right) \in U(1)^3 \rtimes (\Z/9\Z)^*
$$
where $\gp$ is any prime ideal of the factorization of $p\mathcal{O}$, our claim is equivalent to show that 
the corresponding $L$-function 
$$
L(\theta, s)  = 
\prod_{p \neq 3} 
(1-\det (\theta(x_{p})) p^{-s})^{-1}
$$ is invertible provided that $(b_1,b_2,b_3) 
\neq (0,0,0) $. Assume first that 
$H_{\underline{b}}$ is trivial. 
Then, also $\chi$ is trivial and one has
$$
L(\theta,s) = L(\phi_{\underline{b}},s) = 
\prod_{p\neq 3} \left( 
1- \frac{{\,}^{\sigma_2}\psi(\gp)^{b_1}
{\,}^{\sigma_4}\psi(\gp)^{b_2}
{\,}^{\sigma_8}\psi(\gp)^{b_3}}
{\sqrt{\on{N}(\gp)}^{b_1+b_2+b_3}} \right)\,.
$$
This can be seen as the $L$-function of the unitarized Gr\"ossencharakter
$$
\Psi := \frac{
{\,}^{\sigma_2}\psi(\cdot)^{b_1}
{\,}^{\sigma_4}\psi(\cdot)^{b_2}
{\,}^{\sigma_8}\psi(\cdot)^{b_3} }
{{\on{N}(\cdot)}^{(b_1+b_2+b_3)/2}}
$$
Under our assumption $(b_1,b_2,b_3) 
\neq (0,0,0) $ and by using the factorization of
$\psi(\gp)\cO$ into prime ideals (see property (iii) in the proof of Proposition \ref{prop1.1}), an easy computation shows that $\Psi$ is non-trivial. Hecke showed \cite{He20} that the $L$-function of a non-trivial unitarized 
Gr\"ossencharkter is holomorphic and non-vanishing for $\on{Re}(s)\geq 1$. In the remaining case, that is for $H_{\underline{b}}$ of order $3$, one gets $L(\Psi,s)=L(\theta,s)^3$ and the claim also follows by the same argument.
\end{proof}

\vskip 0.3truecm

\section{The moment sequences}

In this section we will compute the moment sequences in two independent ways, one (exact) from the Sato-Tate group and the other one (numerically) by computing the local factors of our curve up to some bound.

\medskip

Let $\mu$ be a positive measure on $I = [-d,d]$. Then, on the one hand, for every integer $n\geq 0$, the $n$th moment $M_n[\mu]$ is by definition 
$\mu(\varphi_n)$, where $\varphi_n$ is the function $z \mapsto 
z^n$.
That is, we have
\begin{equation*}
M_n[\mu] = \int_{I} \, z^n\mu(z)
\end{equation*}
The measure $\mu$ is \emph{uniquely} determined by its moment sequence $M_n[\mu]$. 

On the other hand, if a sequence $\{a(p)\}_p$ is $\mu$-equidistributed, then the following equality holds:
$$ M_n[\mu] = \lim_{x\to\infty} \frac{1}{\pi(x)}\sum_{p\leq x} a_p^n \,.$$

\medskip

From now on, we shall denote by $a_1(p)$, $a_2(p)$, $a_3(p)$ the {\sl higher} traces according to
$$
L_p^{\operatorname{ST}}(C,T)=
1+ a_1(p)T+ a_2(p) T^2+ a_3(p) T^3 +a_2(p)T^4
+a_1(p)T^5+T^6 \,.
$$

Recall that due to the Weil's conjectures, we know that
$$
a_1(p) \in [-6,6]\,,\quad
a_2(p) \in [-15,15]\,,\quad
a_3(p) \in [-20,20]\,.
$$

\subsection{The distribution of $ST(C)$}

For each $i$ in $\{ 1,2,3 \}$, let $\mu_i$ denote the projection on the
interval $I_i=[-\binom{6}{i},+\binom{6}{i} ]$ obtained from the Haar measure of the Sato-Tate group $\operatorname{ST(C)}\simeq {U}(1)^3 \rtimes
(\Z/9\Z)^*$.

In general it is difficult to obtain the explicit distribution function, but because of the isomorphism stated in Proposition \ref{STgroup}, we can easily compute the moment sequence of the Sato-Tate measure.

Similarly as in  \cite{FGL14} we shall split each measure as a sum of its restrictions to each component of $\on{ST}(C)^0 \cdot \gamma^k$, where $0\leq k \leq 5$.

Therefore one has
$$
\mu_i = \frac{1}{6} \sum_{0\leq k \leq 5} {\,}^k\mu_{i} \,, \quad M_n[\mu_i] = \frac{1}{6}\sum_{0\leq k \leq 5} M_n[{\,}^k\mu_{i}]
$$
so we can compute the moments $M_n[{\,}^k\mu_{i}]$ separately for every $k$ and then get the \emph{total} moments $M_n[\mu_{i}]$. To ease notation, we shall denote  the moment sequences by
$$
M[\mu_i]:= ( M_0[\mu_i], M_1[\mu_i], M_1[\mu_i], \dots , M_n[\mu_i], \dots ) \,,
$$
and similarly for every $M[{\,}^k\mu_{i}]$.

In what follows, the characteristic polynomial of a matrix in $\on{USp}(6)$ we will be denoted by
$$
P(T)=
1+ a_1T+ a_2 T^2+ a_3 T^3 +a_2T^4+a_1T^5+T^6 \,.
$$

\noindent Case $k=1,5$: In these components, according to Remark \ref{Poly-shape} one has that
$P(T) = T^6+1$, so that $$a_1 = a_2 = a_3=~0\,.$$
Hence, 
$$M_n[{\,}^k\mu_{1}] = M_n[{\,}^k\mu_{2}] = M_n[{\,}^k\mu_{3}] = 0 \,
\text{ for all }
n\geq 1\,.$$ 

\noindent Case $k=2,4$: In these components, we have
$$P(T) = T^6 \pm (u_1\overline{u}_2u_3 + \overline{u}_1u_2\overline{u}_3)T^3 + 1 \,.$$
So that $a_1 = a_2 = 0$.
Hence, it follows that
$$M_n[{\,}^k\mu_{1}] = M_n[{\,}^k\mu_{2}] = 0 \text{ for all }
n\geq 1\,.$$
To get the distribution of the third trace, 
since $u_1$, $u_2$, and $u_3$ are independent elements of $\on{U}(1)$, the distribution of $a_3(p)$ will correspond to the distribution of $\alpha := u + \overline{u}$ for $u\in\on{U}(1)$, and hence its associated moment sequence is
$$M[{\,}^k\mu_{3}] = (1,0,2,0,6,0,20,0,\dots)\,.$$

\noindent Case $k=3$: In this case, one has
$P(T)= (1 + T^2)^3$, so that we have
$a_1 = a_3 = 0$, while $a_2 = 3$.
Hence, we obtain
$$M_n[{\,}^3\mu_{1}] = M_n[{\,}^3\mu_{3}] = 0\,,\ 
  M_n[{\,}^3\mu_{2}] = 3^n \ \text{ for all }
n\geq 1\,.$$

\noindent Case $k=0$: In this case one has that 
$P(T) = \prod_{i=1}^3 (T-u_i)(T-\overline{u}_i)$. If we develop this expression we get the following coefficients, where as above $\alpha_i$ stands for the sum of $u_i$ and its complex conjugate:
\begin{equation*}\label{aidealfa}
\begin{array}{l@{\,=\,}l}
a_1 & \alpha_1+\alpha_2+\alpha_3\,,\\[3pt]
a_2 & 3 + \alpha_1\alpha_2 + \alpha_2\alpha_3 + \alpha_1\alpha_3\,,\\[3pt]
a_3 & 2\,\alpha_1 + 2\,\alpha_2 + 2\,\alpha_3 + \alpha_1\alpha_2\alpha_3.
\end{array}
\end{equation*}

To get the sequences we proceed as follows. Recall that 
if $X$ and $Y$ denote independent random variables, then $M_n[X] = E(X^n)$, $E(X+Y) = E(X)+E(Y)$, and $E(XY) = E(X)E(Y)$. Hence, one has
\begin{align*}
M_n[X+Y] = E((X+Y)^n) =& E\left(\sum_{k=0}^n\binom{n}{k}X^kY^{n-k}\right)\\
 =& \sum_{k=0}^n\binom{n}{k}E(X^k)E(Y^{n-k})\\ =& \sum_{k=0}^n\binom{n}{k}M_k[X]M_{n-k}[Y]\,.
\end{align*}

Since we know that $M[\alpha]:=M[\alpha_i] = (1,0,2,0,6,0,20,0,\dots)$ for $i=1,2,3$, one gets:
\begin{align*}
M_n[{\,}^0 \mu_{1}] =& \sum_{a+b+c=n}\binom{n}{a,b,c}M_a[\alpha]M_b[\alpha]M_c[\alpha],\\
M_n[{\,}^0\mu_{2}] =& \sum_{a+b+c+d=n}\binom{n}{a,b,c,d}3^aM_{b+d}[\alpha]M_{b+c}[\alpha]M_{c+d}[\alpha]\,,\\
M_n[{\,}^0\mu_{3}] =& \sum_{a+b+c+d=n}\binom{n}{a,b,c,d}2^{a+b+c}M_{a+d}[\alpha]M_{b+d}[\alpha]M_{c+d}[\alpha].
\end{align*}

Therefore we obtain the sequences:
\begin{align*}
M[{\,}^0\mu_{1}] =& (1,0,6,0,90,0,1860,\dots),\\
M[{\,}^0\mu_{2}] =& (1,3,21,183,1845,\dots),\\
M[{\,}^0\mu_{3}] =& (1,0,32,0,4920,0,1109120,\dots).
\end{align*}

We can summarize the above results in the following proposition.

\begin{prop} With the above notations, the first moments of the
measures of ${\,}^k\mu_i$ and $\mu_i$ are as follows:

\begin{itemize}
\item[(i)] {The moments of the first trace are:} 
$$M[{\,}^k\mu_1] = \begin{cases}
(1,0,0,\dots) &\text{if } k = 1,\dots,5\,;\\
(1,0,6,0,90,0,1860,\dots) & \text{if } k = 0\,.
\end{cases}$$

Hence, $M[\mu_1] = (1,0,1,0,15,0,310,\dots)$. 

\item[(ii)]{The moments of the second trace are:} 
$$M[{\,}^k\mu_2] = \begin{cases}
(1,0,0,\dots) &\text{if } k = 1,2,4,5\,;\\
(1,3,9,27,\dots) &\text{if } k = 3\,;\\
(1,3,21,183,1845,\dots) &\text{if } k = 0\,.
\end{cases}$$

Hence, $M[\mu_2] = (1,1,5,35,321,\dots)$.

\item[(iii)]{The moments of the third trace are:} 
$$M[{\,}^k\mu_3] = \begin{cases}
(1,0,0,\dots) &\text{if } k = 1,3,5\,;\\
(1,0,2,0,6,0,20,0,\dots) &\text{if } k = 2,4\,;\\
(1,0,32,0,4920,0,1109120,\dots) &\text{if } k = 0\,.
\end{cases}$$

Hence, $M[\mu_3] = (1,0,6,0,822,0,184860,0\dots)$.
\end{itemize}

\end{prop}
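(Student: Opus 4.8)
The plan is to collect the per-component moment computations already carried out immediately above the statement, and then assemble the total moments via the averaging formula $M_n[\mu_i] = \frac{1}{6}\sum_{k=0}^5 M_n[{}^k\mu_i]$. So the proof is essentially bookkeeping: for each of the three traces $i\in\{1,2,3\}$, the statement's first bullet repackages the case analysis $k=1,\dots,5$ versus $k=0$ (and for $i=2,3$ the special role of $k=3$ and of $k=2,4$), which we have already established in Remark \ref{Poly-shape} and the subsequent discussion.

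First I would recall, for the record, the shape of the characteristic polynomial on each component from Remark \ref{Poly-shape}: on ${}^1\mu_i$ and ${}^5\mu_i$ the polynomial is $T^6+1$, forcing $a_1=a_2=a_3=0$; on ${}^2\mu_i$ and ${}^4\mu_i$ it is $T^6\pm(u_1\overline u_2 u_3+\overline u_1 u_2\overline u_3)T^3+1$, forcing $a_1=a_2=0$ and making $a_3$ distributed as $u+\overline u$ for a single $u\in U(1)$, whose moment sequence is $(1,0,2,0,6,0,20,0,\dots)$ (central binomial coefficients $\binom{2m}{m}$); on ${}^3\mu_i$ it is $(1+T^2)^3$, forcing $a_1=a_3=0$ and $a_2=3$ identically; and on ${}^0\mu_i$ the polynomial is $\prod_{i=1}^3(T-u_i)(T-\overline u_i)$ with $a_1,a_2,a_3$ the symmetric expressions in $\alpha_i=u_i+\overline u_i$ displayed above. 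For the $k=0$ component I would then invoke the multinomial expansion together with independence of $\alpha_1,\alpha_2,\alpha_3$ and the known sequence $M[\alpha]=(1,0,2,0,6,0,20,0,\dots)$ to get the sums $M_n[{}^0\mu_i]$ written above, yielding $M[{}^0\mu_1]=(1,0,6,0,90,0,1860,\dots)$, $M[{}^0\mu_2]=(1,3,21,183,1845,\dots)$, $M[{}^0\mu_3]=(1,0,32,0,4920,0,1109120,\dots)$.

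With the per-component sequences in hand, the three \textbf{Hence} lines follow by averaging. For the first trace only $k=0$ contributes past the $0$th moment, so $M_n[\mu_1]=\tfrac16 M_n[{}^0\mu_1]$, giving $(1,0,1,0,15,0,310,\dots)$. For the second trace, $k=3$ contributes $3^n$ and $k=0$ contributes $M_n[{}^0\mu_2]$, so $M_n[\mu_2]=\tfrac16(3^n+M_n[{}^0\mu_2])$, giving $(1,1,5,35,321,\dots)$ — one checks e.g. $\tfrac16(3+21)=4$? no, $\tfrac16(9+21)=5$ for $n=2$, $\tfrac16(27+183)=35$ for $n=3$, consistent. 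For the third trace, $k=2$ and $k=4$ each contribute the sequence $(1,0,2,0,6,0,20,\dots)$ and $k=0$ contributes $M_n[{}^0\mu_3]$, so $M_n[\mu_3]=\tfrac16(2\,\binom{n}{n/2}[n\text{ even}]+M_n[{}^0\mu_3])$ (all odd moments vanish), giving $(1,0,6,0,822,0,184860,0,\dots)$; e.g. $\tfrac16(2\cdot2+32)=6$, $\tfrac16(2\cdot6+4920)=822$.

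There is essentially no obstacle: every ingredient is already derived in the text preceding the proposition, and the only remaining work is arithmetic — expanding the multinomial sums for the $k=0$ component and then taking the six-term averages. The one place to be careful is the $k=0$ computation, where one must correctly track which exponents $\alpha_i$ carries inside each symmetric polynomial $a_1,a_2,a_3$ (so that the multinomial coefficients $\binom{n}{a,b,c}$ or $\binom{n}{a,b,c,d}$ pair with the right shifted moments $M_{\,\cdot}[\alpha]$), but this has already been done and recorded in the three displayed formulas for $M_n[{}^0\mu_i]$ above. Hence the proof consists of quoting those formulas, noting the vanishing of all odd moments in the $i=1,3$ cases by the central symmetry of the relevant distributions, and performing the averaging; the stated sequences then match.
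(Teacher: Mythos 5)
Your proposal is correct and follows essentially the same route as the paper: the proposition is explicitly a summary of the preceding per-component computations (Remark \ref{Poly-shape}, the multinomial formulas for $M_n[{}^0\mu_i]$, and the averaging identity $M_n[\mu_i]=\tfrac16\sum_{k=0}^5 M_n[{}^k\mu_i]$), and your assembly and spot-checks of the totals match the stated sequences. The only cosmetic issue is the self-correcting aside in the $\mu_2$ verification, which should be cleaned up before inclusion.
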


\subsection{The numerical sequences for $C$\label{numeric}}
Once we have computed the theoretical moment sequences from 
the Sato-Tate group $\on{ST}(C)$, we wish to compute for every prime (up to some bound) its associated normalized local factor $L_p^{\on{ST}}(A,T)$ to get the corresponding traces $a_1(p), \, a_2(p)$ and $a_3(p)$ and do the experimental equidistribution matching.

The Gr\"ossencharakter $\psi$ attached to the Picard curve $C$ permits us to perform this numerical experimentation within a reasonable time, in this case $p\leq 2^{26}$ (about two hours of a standard laptop). We display the data obtained:

\begin{table}[h]
$$\begin{array}{rcrrcrrcrr}
\hline
&&\multicolumn{2}{c}{a_1}&&\multicolumn{2}{c}{a_2}&&\multicolumn{2}{c}{a_3}\\
n&&M_n[\mu_1]&M_n[\mu_1]_{\leq 2^{26}}&&M_n[\mu_2]&M_n[\mu_2]_{\leq 2^{26}}&&M_n[\mu_3]&M_n[\mu_3]_{\leq 2^{26}}\\
\hline
0&&1&1&&1&1&&1&1\\
1&&0&-0.000&&1&0.999&&0&-0.000\\
2&&1&0.998&&5&4.991&&6&5.984\\
3&&0&-0.005&&35&34.868&&0&-0.147\\
4&&15&14.946&&321&319.058&&822&815.937\\
5&&0&-0.151&&&&&&\\
6&&310&308.160&&&&&&\\
\hline
\end{array}$$
\caption{Numerical moment sequences computed for $p$ up to $2^{26}$.}
\end{table}

We include graphics to display the histograms (for primes up to $p\leq 2^{26}$)
showing the nondiscret components of the three distributions $\mu_i$. 

\begin{figure}[H]
\centering
\includegraphics[scale=.75]{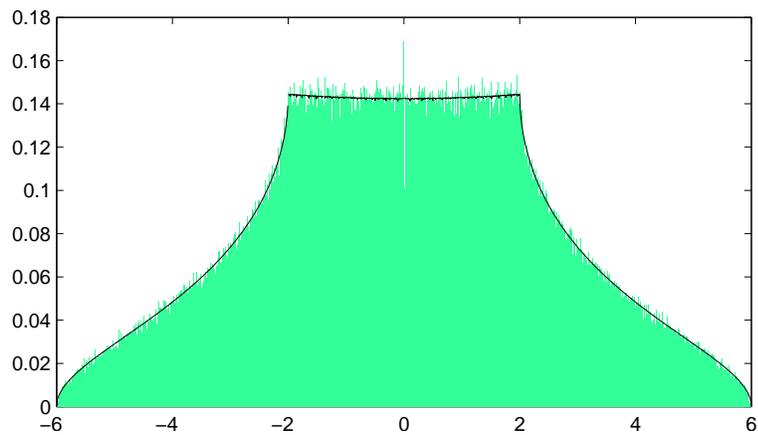}
\caption{Histogram of the first trace for primes $p\equiv 1 \, (\!\bmod 9)$.}
\label{fig:digraph1}
\end{figure}

\begin{figure}[H]
\centering
\includegraphics[scale=.75]{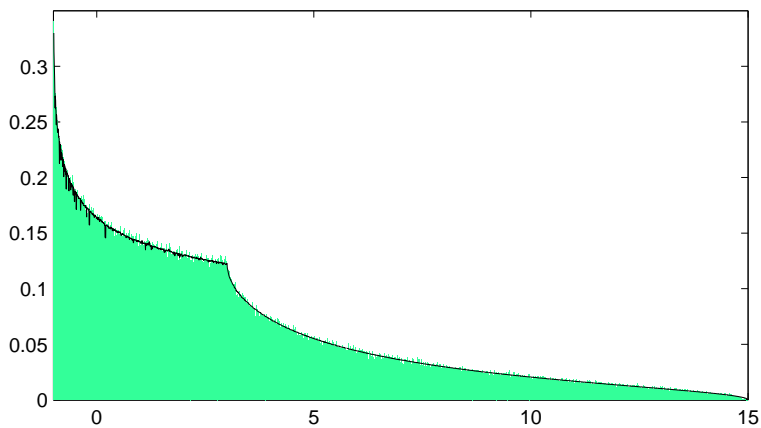}
\caption{Histogram of the second trace for primes $p\equiv 1 \, (\!\bmod 9)$.}
\label{fig:digraph2}
\end{figure}

\begin{figure}[H]
\centering
\includegraphics[scale=.75]{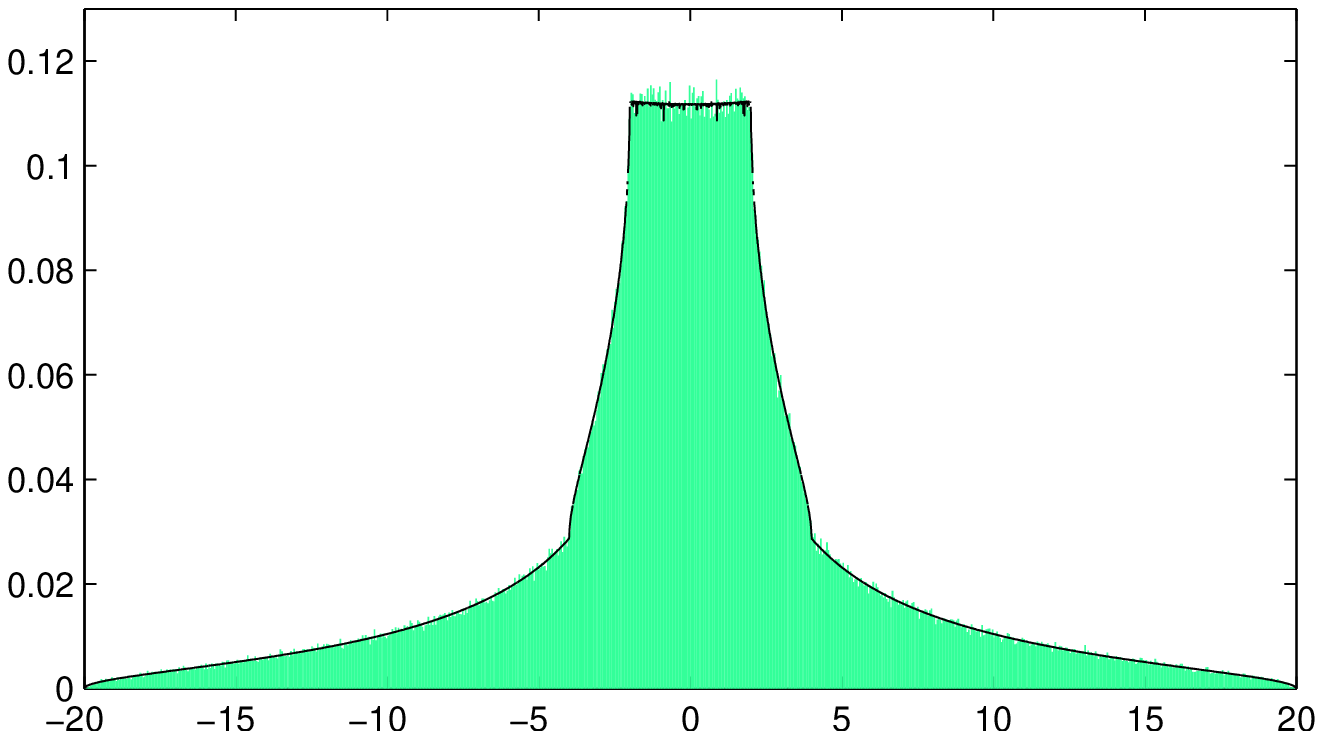}
\caption{Histogram of the third trace for primes $p\equiv 1 \, (\!\bmod 9)$.}
\label{fig:digraph3}
\end{figure}

\begin{figure}[H]
\centering
\includegraphics[scale=.75]{}
\caption{Histogram of the third trace for primes $p\equiv 4,7 \, (\!\bmod 9)$.}
\label{fig:digraph4}
\end{figure}

\bigskip

\nocite{*}
\bibliographystyle{amsalpha}
\bibliography{ref}

\providecommand{\bysame}{\leavevmode\hbox to3em{\hrulefill}\thinspace}
\providecommand{\MR}{\relax\ifhmode\unskip\space\fi MR }
\providecommand{\MRhref}[2]{%
  \href{http://www.ams.org/mathscinet-getitem?mr=#1}{#2}
}
\providecommand{\href}[2]{#2}
\begin{thebibliography}{FKRS12}

\bibitem[BGK03]{BGK03}
G.~Banaszak, W.~Gajda, and P.~Kraso{\'n}, \emph{On {G}alois representations for
  abelian varieties with complex and real multiplications}, J. Number Theory
  \textbf{100} (2003), no.~1, 117--132. \MR{1971250 (2004a:11042)}

\bibitem[FGL14]{FGL14}
F.~{Fit{\'e}}, J.~{Gonz{\'a}lez}, and J.-C. {Lario}, \emph{{Frobenius
  distribution for quotients of Fermat curves of prime exponent}},
  arXiv:1403.0807.

\bibitem[FKRS12]{FKRS12}
F.~Fit{\'e}, K.S. Kedlaya, V.~Rotger, and A.V. Sutherland, \emph{Sato-{T}ate
  distributions and {G}alois endomorphism modules in genus 2}, Compos. Math.
  \textbf{148} (2012), no.~5, 1390--1442.

\bibitem[Has54]{Has}
Helmut Hasse, \emph{Zetafunktion und {$L$}-{F}unktionen zu einem arithmetischen
  {F}unktionenk\"orper vom {F}ermatschen {T}ypus}, Abh. Deutsch. Akad. Wiss.
  Berlin. Kl. Math. Nat. \textbf{1954} (1954), no.~4, 70 pp. (1955).
  \MR{0076807 (17,947d)}

\bibitem[Hec20]{He20}
E.~Hecke, \emph{Eine neue {A}rt von {Z}etafunktionen und ihre {B}eziehungen zur
  {V}erteilung der {P}rimzahlen}, Math. Z. \textbf{6} (1920), no.~1-2, 11--51.
  \MR{1544392}

\bibitem[HN02]{Hol-Nic03}
R.-P. Holzapfel and F.~Nicolae, \emph{Arithmetic on a family of {P}icard
  curves}, Finite fields with applications to coding theory, cryptography and
  related areas ({O}axaca, 2001), Springer, Berlin, 2002, pp.~187--208.

\bibitem[Joh13]{Joh13}
Christian Johansson, \emph{On the sato-tate conjecture for non-generic abelian
  surfaces}.

\bibitem[Kub65]{K}
T.~Kubota, \emph{On the field extension by complex multiplication}, Trans.
  Amer. Math. Soc. \textbf{118} (1965), 113--122.

\bibitem[Rib81]{R}
K.~A. Ribet, \emph{Division fields of abelian varieties with complex
  multiplication}, M\'em. Soc. Math. France (N.S.) (1980/81), no.~2, 75--94,
  Abelian functions and transcendental numbers (Colloq., {\'E}tole Polytech.,
  Palaiseau, 1979).

\bibitem[Ser77]{Ser77}
J.-P. Serre, \emph{Linear representations of finite groups}, Springer-Verlag,
  New York-Heidelberg, 1977.

\bibitem[Ser98]{Ser68}
J.~P. Serre, \emph{Abelian {$l$}-adic representations and elliptic curves},
  Research Notes in Mathematics, vol.~7, A K Peters, 1998.

\bibitem[Ser12]{Ser12}
J.-P. Serre, \emph{Lectures on {$N_X (p)$}}, Chapman \& Hall/CRC Research Notes
  in Mathematics, vol.~11, CRC Press, Boca Raton, FL, 2012.

\bibitem[ST61]{ST}
G.~Shimura and Y.~Taniyama, \emph{Complex multiplication of abelian varieties
  and its applications to number theory}, vol.~6, Math. Soc. Japan, Tokyo,
  1961.

\bibitem[Wei49]{We49}
Andr{\'e} Weil, \emph{Numbers of solutions of equations in finite fields},
  Bull. Amer. Math. Soc. \textbf{55} (1949), 497--508. \MR{0029393 (10,592e)}

\end{thebibliography}


\begin{thebibliography}{McK-Sta}
\bibitem[BEW98]{BEW98} B.C. Berndt, R.J. Evans, K.S. Williams, \emph{Gauss and Jacobi sums}, John Wiley, Canada, 1998.

\bibitem[HN02]{HN02} R-P. Holzapfel, F. Nicolae \emph{Arithmetic on a family of Picard curves}, Finite Fields with Applications to Coding Theory, Cryptography and Related Areas, Springer Verlag, Berlin Heidelberg, 2002.

\bibitem[IR90]{IR90} K. Ireland, M. Rosen, \emph{A Classical Introduction to Modern Number Theory}, Springer-Verlag, New York, 1990. 

\end{thebibliography}

\newpage

\section*{Appendix (by F. Fité)}
\setcounter{section}{1}
\setcounter{teo}{0}
\setcounter{prop}{0}
\renewcommand{\thesection}{\Alph{section}}

We keep the notation of the article. Let $C\colon y^ 3=x^ 4-x$. Let $K$ denote the cyclotomic field $\Q(\zeta)$, where $\zeta$ is a 9th root of unity. For every prime $\p$ of $K$ coprime to 3, consider the character $\chi_\p\colon \F_\p^*\rightarrow K^*$ such that $\chi_\p(x)$ is the only 9th root of unity satisfying
$$
\chi_\p(x)\equiv x^{(N(\p)-1)/9}\pmod \p\,.
$$
For $a,b\in \Z/9\Z$, define
$$
J_{(a,b)}(\p):=\sum_{x\in\F_\p}\chi_\p^a(x)\chi^b_\p(1-x)\,.
$$
 
\begin{prop} The number of points of $ C$ defined over the finite field $\F_\p$ is
$$
| C(\F_\p)|=
\begin{cases}
1 + N(\p) & \text{if $N(\p) \not\equiv 1 \pmod 9$,}\qquad \text{(i)}\\
1+N(\p)+ \Tr_{K/\Q}(J_{(6,1)}(\p)) &  \text{if $N(\p) \equiv 1 \pmod 9$.}\qquad \text{(ii)}\\
\end{cases}
$$
\end{prop}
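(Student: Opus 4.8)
The plan is to count the affine points of $C$ over $\F_\p$ directly, using the standard technique of expressing point counts on superelliptic curves via Jacobi sums. First I would observe that the affine equation $y^3 = x^4 - x$ over $\F_\p$ has, for each fixed $x_0 \in \F_\p$, exactly $\#\{y \in \F_\p : y^3 = x_0^4 - x_0\}$ solutions, and this number equals $\sum_{j=0}^{2} \psi_3(x_0^4-x_0)^j$ where $\psi_3$ is a cubic character of $\F_\p^*$ (with the convention that $\sum_j \psi_3(0)^j = 1$). When $N(\p) \not\equiv 1 \pmod 9$, in particular when $p \not\equiv 1 \pmod 3$, there is no nontrivial cubic character, every element of $\F_\p^*$ is a cube, and one gets $|C(\F_\p)| = 1 + N(\p)$ counting the point at infinity; this disposes of case (i). (One must also handle the subcase $N(\p) \equiv 1 \pmod 3$ but $\not\equiv 1 \pmod 9$; here I would use that $\chi_\p$ has order $9$ but the relevant sums still collapse — see below.)

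For case (ii), assume $N(\p) \equiv 1 \pmod 9$, so $\chi_\p$ has exact order $9$ and $\psi_3 := \chi_\p^3$ is a primitive cubic character. Then
$$
|C(\F_\p)| = 1 + \sum_{x \in \F_\p}\sum_{j=0}^{2} \chi_\p^{3j}(x^4-x)\,.
$$
The $j=0$ term contributes $N(\p)$. For $j=1,2$ I would write $x^4 - x = x(x^3-1) = x \prod (x - \eta)$ and, more usefully, substitute to bring the sum into Jacobi-sum form: factoring $x^4 - x = -x(1 - x^3)$ and then, after a change of variable $x \mapsto$ a cube root situation, reduce $\sum_x \chi_\p^{3j}(x^4-x)$ to a sum of the shape $\sum_x \chi_\p^{a}(x)\chi_\p^{b}(1-x) = J_{(a,b)}(\p)$ up to an explicit root-of-unity factor coming from $\chi_\p(-1)$ and from pulling the exponent $4$ through $\chi_\p^{3j}$. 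The key identity to extract is that the two terms $j=1$ and $j=2$ are complex conjugates, and that together they assemble (after Galois averaging via the norm) into $\Tr_{K/\Q}(J_{(6,1)}(\p))$. The precise bookkeeping — matching $\chi_\p^{3j}(x^4)=\chi_\p^{12j}(x)=\chi_\p^{3j}(x)$ since $\chi_\p^9=1$, isolating the factor $\chi_\p(-1)$, and identifying which $(a,b)$ pair modulo $9$ survives — is the main obstacle, and is exactly the computation that the cited work of Holzapfel–Nicolae spends four pages on; the point of the appendix is to organize it into a one-page Jacobi-sum manipulation.

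The final step is to check consistency: one verifies $|J_{(6,1)}(\p)| = \sqrt{N(\p)}$ by the standard absolute-value formula for Jacobi sums (valid since $\chi_\p^6$ and $\chi_\p$ and $\chi_\p^7$ are all nontrivial when $N(\p)\equiv 1 \pmod 9$), which recovers the Weil bound, and one cross-checks against the table of point counts in the body of the paper (e.g. $p=19$, where $19 \equiv 1 \pmod 9$). I would also remark that $J_{(6,1)}(\p)$ coincides, up to the sign conventions, with the Jacobi sum $J(\p)$ appearing in the proof of Proposition~\ref{prop1.1}, so that the appendix both reproves that proposition and makes the normalization explicit. The subcase $N(\p)\equiv 4,7 \pmod 9$ within (i) is handled the same way: there $\chi_\p$ has order $9$ but $\gcd(3, (N(\p)-1)/3)$-considerations force $\chi_\p^3$ restricted appropriately to kill the sum, or more simply one notes $x \mapsto x^4-x$ hits cubes and non-cubes in a balanced way because $-1$ and the cube-coset structure align; I expect this to need a short separate lemma rather than a genuinely new idea.
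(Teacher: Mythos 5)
Your overall strategy---count fibers of $y\mapsto y^3$ over $x^4-x$ with cubic characters and massage the result into Jacobi sums---is a legitimate starting point, but as written the proposal has a genuine gap at exactly the decisive step. For case (ii) you reduce to evaluating $\sum_{x}\chi_\p^{3j}(x^4-x)$ for $j=1,2$ and then say that reducing this to a sum of the shape $J_{(a,b)}(\p)$ ``is the main obstacle, and is exactly the computation that Holzapfel--Nicolae spends four pages on.'' That is the content of the proposition; deferring it means the proof is not given. The difficulty is real: $x^4-x$ is not of the shape $x^a(1-x)^b$, so no single change of variables turns $\sum_x\chi_\p^{3j}(x^4-x)$ into one Jacobi sum. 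If you push your computation through (write $\chi_\p^{3j}(x)=\chi_\p^{j}(x^3)$, substitute $u=x^3$, and expand the indicator of the cubes as $\tfrac13\sum_k\chi_\p^{3k}(u)$), you get for each $j$ a sum of \emph{three} Jacobi sums $J_{(j+3k,3j)}(\p)$, $k=0,1,2$, and you then still need the Gauss-sum identities $J_{(a,b)}=J_{(-a-b,b)}$ (up to $\chi_\p(-1)$) to reassemble these six terms into the single Galois orbit $\Tr_{K/\Q}(J_{(6,1)}(\p))$. None of this bookkeeping appears in your proposal. Similarly, your treatment of the subcase $N(\p)\equiv 4,7\pmod 9$ of (i) (``the cube-coset structure aligns'') gestures at the right fact---that a primitive cube root of unity in $\F_\p$ is not itself a cube unless $9\mid N(\p)-1$, which kills the inner sums---but does not prove it.

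The paper's appendix avoids all of this with one idea you are missing: the explicit isomorphism $\phi(x,y)=\left(-\tfrac{1}{x^3},-\tfrac{y^2}{x^3}\right)$ from $C$ onto $C'\colon v^9=u(u+1)^6$. The model $C'$ \emph{is} of the standard superelliptic shape, so the Ireland--Rosen point-count formula applies verbatim and yields $1+N(\p)+\sum_{a\in(\Z/9\Z)^*}\sum_u\chi_\p^a(u)\chi_\p^{6a}(u+1)$ in one line; the inner sums are already the Galois conjugates of a single Jacobi sum, and the only remaining check is $\chi_\p(-1)=1$. If you want to salvage your direct approach, you must either carry out the six-Jacobi-sum reduction sketched above in full, or find (as the appendix does) a change of model that makes the curve superelliptic in the standard sense before counting.
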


\begin{proof}
Case $(i)$ is considered in Proposition 1 and Proposition 2 of \cite{HN02}. We now show case (ii), by giving an alternative and shorter proof of Proposition 3 of \cite{HN02}. Let  $
 C'\colon v^ 9=u(u+1)^ 6$. 
There is an isomorphism between~$ C$ and~ $ C'$ given by
$$
\phi\colon  C \rightarrow  C'\,, \qquad \phi(x,y)=\left(-\frac{1}{x^3},-\frac{y^ 2}{x^3} \right)\,.
$$
One easily sees that the inverse of $\phi$ is given by
$$
\phi^{-1}\colon  C' \rightarrow  C\,, \qquad \phi^{-1}(u,v)=\left(-\frac{(u+1)^2}{v^3},-\frac{(u+1)^3}{v^4} \right)\,.
$$
Note that if $N(\p) \not\equiv 1 \pmod 3$, then exponentiation by 9 is an isomorphism of $\F_\p$. Thus~$ C'$ has~$N(\p)$ affine points plus one point at infinity. Assume now that $N(\p) \equiv 1 \pmod 9$. By \cite[Prop. 8.1.5]{IR90}, we have that
\begin{equation*}
\begin{aligned}
| C'(\F_\p)| &=1+\sum_{u\in\F_\p}\sum_{a\in\Z/9\Z}\chi_\p^a(u)\chi_\p^{6a}(u+1)\\
&=1+N(\p)+\sum_{a\in(\Z/9\Z)^*}\sum_{u\in\F_\p}\chi_\p^a(u)\chi_\p^{6a}(u+1)\,,
\end{aligned}
\end{equation*}
where for the second equality we have used \cite[Thm. 1 (b), p. 93]{IR90}. But writing $x=u+1$, we obtain
$$
\sum_{u\in\F_\p}\chi_\p(u)\chi_\p^{6}(u+1)=\chi_\p(-1)\sum_{x\in\F_\p}\chi_\p^{6}(x)\chi_\p(1-x)\,.
$$
Case $(ii)$ of the proposition is a consequence of the equality $\chi_\p(-1)=1$ (this follows form the fact that the order of $\chi_\p$ is odd).
\end{proof}

To show that our result agrees with Proposition 3 of \cite{HN02} it remains to show that
$
\Tr_{K/\Q}(J_{(6,1)}(\p))=\Tr_{K/\Q}(J_{(3,1)}(\p))\,.
$
Indeed, by \cite[Thm. 2.1.5]{BEW98}, one has
$$
J_{(6,1)}(\p)=J_{(2,1)}(\p)\,,\qquad J_{(3,1)}(\p)=J_{(5,1)}(\p)\,.
$$
Since $5\cdot 2\equiv 1 \pmod{9}$, we deduce that 
$$
\Tr_{K/\Q}(J_{(2,1)}(\p))=\Tr_{K/\Q}(J_{(5,1)}(\p))\,.
$$

Finally, note that $J(\p)=-J_{(3,1)}(\p)$ in the notation of the article.

\end{document}